\newtheorem{theorem}{Theorem}[section]
\newtheorem{proposition}[theorem]{Proposition}
\newtheorem{lemma}[theorem]{Lemma}
\newtheorem{remit}[theorem]{Remark}
\newtheorem{definition}[theorem]{Definition}
\newcommand{\pp}{\mathbb{P}}
\newcommand{\cc}{\mathbb{C}}
\newcommand{\TT}{\mathbb{T}}
\newcommand{\rk}{\mathrm{rk}}
\newcommand{\cF}{\mathcal{F} }
\newcommand{\cO}{\mathcal{O} }
\newcommand{\cP}{\mathcal{P} }
\begin{document}

\title[Secant varieties and Hirschowitz bound]{Secant varieties and Hirschowitz bound \\on vector
bundles over a curve}
\date{}
\author{Insong Choe and George H. Hitching}
\address{Department of Mathematics, Konkuk University, 1 Hwayang-dong, Gwangjin-Gu, Seoul, Korea}
\email{ischoe@konkuk.ac.kr}
\address{H\o gskolen i Vestfold, Boks 2243, N-3103 T\o nsberg, Norway} \email{george.h.hitching@hive.no}

\thanks{This work was supported by Korea Research Foundation Grant funded by the Korean Government
(KRF-2008-331-C00033).} \subjclass{14H60, 14N05}

\keywords{Secant variety, Segre invariant, Vector bundle over a
curve, Hirschowitz bound}

\begin{abstract}
For a vector bundle $V$ over a curve $X$ of rank $n$ and for each
integer $r$ in the range $1 \le r \le n-1$, the Segre invariant
$s_r$ is defined by generalizing the minimal self-intersection
number of the sections on a ruled surface. In this paper we
generalize Lange and Narasimhan's results on rank 2 bundles which
related the invariant $s_1$ to the secant varieties of the curve
inside certain extension spaces. For any $n$ and $r$, we find a way
to get information on the invariant $s_r$ from the secant varieties
of certain subvariety of a scroll over $X$. Using this geometric
picture, we obtain a new proof of the Hirschowitz bound on $s_r$.
\end{abstract}
\maketitle

\section{Introduction}

Let $X$ be a smooth algebraic curve of genus $g \ge 2$. Let $V$ be a
vector bundle over $X$ of rank $n$. For each $1 \le r < n$, the {\it
$r$-th Segre invariant of $V$} is defined by
\[
s_r(V) := \min \{ r \deg(V) - n \deg (E) \},
\]
where the minimum is taken over the subbundles $E$ of $V$ of rank
$r$. If a subbundle $E$ realizes this minimum, that is, $s_r(V) = r
\deg(V) - n \deg (E)$, then $E$ is called a {\it maximal subbundle
of $V$}.

When $n = 2$ and $r=1$, we have $s_1(V)= -e$ for the classical Segre
invariant $e$ of the ruled surface $\pp V$ defined by the minimal
self-intersection number of the sections (\cite{Ha}, V \S2). In
general, the invariant $s_r(V)$ has an alternative definition in
terms of the intersection numbers (see \cite{La3}).

The invariant $s_r$ is lower semicontinuous: for any family $\{V_t:
t \in T \}$ of rank $n$ bundles parameterized by a variety $T$, the
sublocus
\[ \{t \in T : s_r(V_t) \le s \} \]
is closed for each $s$. Hence $s_r$ induces a natural stratification
on the moduli space $U(n,d)$ of vector bundles over $X$ of rank $n$
and degree $d$. This stratification has been studied by several
authors (\cite{BrLa}, \cite{La1}, \cite{RuTe}).

Note that $V$ is semistable if and only if $s_r(V) \ge 0$ for each
$r$ such that $1 \le r < n$. There are several results on the upper
bound of $s_r$. Nagata's bound on ruled surfaces \cite{Na} says that
$s_1(V) \le g$ when $n=2$ and $r=1$. For arbitrary $n$ and $r$,
Mukai and Sakai \cite{MuSa} proved that $s_r(V) \le r(n-r)g$. The
sharp upper bound of $s_r$ was obtained by Hirschowitz:
\begin{proposition} \label{Hirschowitz} \  {\rm (\cite{Hi}, Th\'{e}or\`{e}me 4.4)} \ \
Let $V$ be a vector bundle of rank $n$ over $X$ of genus $g$. For $1
\le r < n$, we have
\begin{equation} \label{bound}
s_r(V) \le r(n-r)(g-1) + \varepsilon,
\end{equation}
where $\varepsilon$ is the integer satisfying
\[ 0 \le \varepsilon \le n-1 \quad \hbox{and} \quad r(n-r)(g-1) + \varepsilon \equiv r \deg (V) \mod n. \qed \]
\end{proposition}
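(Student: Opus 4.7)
The plan is to extend the Lange--Narasimhan secant variety picture, which governs the case $n = 2$, $r = 1$, to arbitrary $(n, r)$. Any rank $r$ subbundle $E \subset V$ of degree $d_E$ realizes $V$ as an extension $0 \to E \to V \to F \to 0$, and satisfies $s_r(V) \le rd - nd_E$. It therefore suffices to show that for any $V$ of rank $n$ and degree $d$, one can exhibit such an $E$ with degree $d_0 := \lfloor (rd - r(n-r)(g-1))/n \rfloor$; indeed, by construction $rd - nd_0 = r(n-r)(g-1) + \varepsilon$. Concretely, I would fix generic stable bundles $E$ of rank $r$, degree $d_0$ and $F$ of rank $n-r$, degree $d - d_0$, consider the family of extensions $\mathbb{P}\mathrm{Ext}^1(F, E)$ as $(E, F)$ varies in its moduli, and establish that the resulting modular map to the moduli of rank $n$ degree $d$ bundles dominates. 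Every bundle in the image admits $E$ as a subbundle of degree $d_0$, so dominance together with lower semicontinuity of $s_r$ extends the bound to all of $U(n,d)$.

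The critical value $d_0$ is precisely the one that makes a dimension count balance. For generic stable $E$ and $F$ at these degrees, slope considerations give $H^0(X, F^\vee \otimes E) = 0$, so Riemann--Roch yields
\[ \dim \mathbb{P}\mathrm{Ext}^1(F, E) \;=\; r(n-r)(g-1) + rd - n d_0 - 1. \]
Adding the dimensions $r^2(g-1) + 1$ and $(n-r)^2(g-1) + 1$ of the two moduli factors gives a total source dimension of $n^2(g-1) + 1 + \varepsilon = \dim U(n, d) + \varepsilon$. Thus the source has just enough dimension to dominate, and the expected generic fibre has dimension $\varepsilon$, matching the anticipated locus of rank $r$ maximal subbundles of a generic $V$.

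The core obstacle is upgrading this numerical match into actual dominance: a priori the modular map could land in a proper sublocus of $U(n, d)$, for instance if every extension class $[\xi]$ produced a bundle with a subbundle of rank $r$ and degree strictly larger than $d_0$. This is exactly where the new secant variety picture should intervene. The Serre dual of $\mathbb{P}\mathrm{Ext}^1(F, E)$ is the projective space $\mathbb{P}H^0(X, K_X \otimes \Hom(F, E))^*$, into which the projective scroll $\mathbb{P}(\Hom(F, E))$ over $X$ embeds (or more precisely, a distinguished subvariety of it does). The higher-rank analogue of the Lange--Narasimhan characterization should identify those $[\xi]$ for which $V$ acquires a strictly larger rank $r$ subbundle with a secant variety of this scroll. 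If, at $d_0$, these secants fail to fill the ambient projective space, then for a generic $[\xi]$ the bundle $V$ has $E$ as a maximal rank $r$ subbundle and dominance follows. The crux of the proof, and the step I expect to require real work, is therefore a sharp dimension estimate for the relevant secant variety: one must rule out any secant defects that would allow it to unexpectedly fill the extension space, in order to conclude that the Hirschowitz value is the genuine upper bound.
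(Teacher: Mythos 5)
Your proposal takes a different architecture from the paper's, and the step you yourself identify as the crux --- dominance of the modular map --- is a genuine gap that your proposed mechanism does not close. The paper never parameterizes extensions with a rank $r$ \emph{sub} of the critical degree $d_0$ and never proves dominance. Instead, Lemma \ref{setting} (a Serre--Atiyah global-generation argument) shows that \emph{every} general $V$ with $d>(2g-1)n$ sits in an extension $0\to E\to V\to F\to 0$ with $E$ of rank $n-r$ and degree $0$ and $F$ of rank $r$ and degree $d$ --- the rank $r$ bundle is the \emph{quotient}. The rank $r$ subsheaf of $V$ of degree $d-m$ is then produced by lifting an elementary transformation $\widetilde{F}\subset F$ to $V$, and the lifting criterion (Theorem \ref{geometry}) is that $v$ lie on $Sec^{m}\Delta$, where $\Delta$ is the $(n-1)$-dimensional rank-one locus inside the scroll $\pp \Hom(F,E)$, not the whole scroll. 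The real work is then showing that $\Delta$ has \emph{no secant defect}, so that $Sec^{m}\Delta$ fills $\pp H^1(X,\Hom(F,E))$ for $m=\lceil\tfrac{n-r}{n}(d+r(g-1))\rceil$; this uses Terracini plus Hirschowitz' lemma that the tensor product of two general bundles is nonspecial. Note the paper needs the secant variety to be as \emph{large} as possible; you need your secant locus to be \emph{small}, and since a secant defect can only make a secant variety smaller than expected, the non-filling you ask for is automatic and is not where the difficulty sits.

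Concretely, your dominance argument fails at two points. First, the secant-variety criterion detects only those rank $r$ subsheaves of $V$ that arise as liftings of elementary transformations of the quotient; an arbitrary rank $r$ subbundle of degree $>d_0$ may meet the sub $E$ in a subsheaf of positive rank and is then invisible to the secant condition. The paper's Remark 4.5 flags exactly this obstruction and notes that the desired equivalence is only available for $r=1$ or $r=n-1$; in your setup the quotient moreover has rank $n-r$, so its elementary transformations do not even have the right rank. Hence ``the secants do not fill the extension space'' does not yield ``$E$ is maximal in $V_\xi$ for generic $\xi$.'' Second, even granting that $E$ is generically maximal, dominance does not follow: if the modular map dropped dimension, the generic fibre over its image --- essentially the family of rank $r$, degree $d_0$ subbundles of a fixed $V$ --- would have dimension $>\varepsilon$, and nothing in your argument rules out a bundle carrying an excessively large family of maximal subbundles. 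Closing the argument requires either a fibre-dimension bound of that kind or, as in the paper, an a priori statement that every general $V$ admits the chosen extension structure.
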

\indent As the main result of this paper, we reprove this bound by
relating the invariants $s_r$ to the geometry of certain secant
varieties. This yields generalizations of the results of Lange and
Narasimhan for bundles of rank 2 (\cite{LaNa}, \cite{La2}). Let us
briefly review their results here.

One can show that every vector bundle $V$ of rank 2 and degree $d
\gg 0$ fits into the exact sequence
\[
0 \to \cO_X \to V \to L \to 0
\]
for some line bundle $L$. So the bundle $V$ corresponds to a point
\[ v \in \pp_L := \pp H^1(X, L^{-1}) \cong \pp H^0(X, K_X L)^\vee . \]
The curve $X$ maps into $\pp_L$ via the linear system $|K_X L|$. We
consider the secant variety $Sec^k X$ which is the closure of the
union of $\pp^{k-1} \subset \pp_L$ spanned by $k$ general points of
$X$. The invariant $s_1$ and the secant variety $Sec^k X$ are
related as follows.
\begin{proposition} \label{LN} {\rm (\cite{LaNa}, Proposition 1.1)} \ \
Suppose that $s \equiv d \mod 2$ and $4-d \le s \le d$. Then $s_1(V)
\ge s$ if and only if $v \notin Sec^k X$, where $k = \frac{1}{2}
(d+s-2)$. \qed
\end{proposition}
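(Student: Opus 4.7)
My plan is to set up a dictionary between line subbundles of $V$ of large degree and effective divisors $D$ on $X$ cutting out linear subspaces of $\pp_L$ through $v$, then to recognise the union of these subspaces (for $\deg D \le k$) as the secant variety $Sec^k X$.

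First, I would analyse a line subbundle $M \hookrightarrow V$ via the composition with $V \twoheadrightarrow L$. This composition is either zero---forcing $M \hookrightarrow \cO_X$ and $\deg M \le 0$---or nonzero, in which case it identifies $M$ with $L(-D)$ for a unique effective divisor $D$ of degree $d - \deg M$. Since the hypothesis $s \le d$ gives $(d-s)/2 \ge 0$, and since $s_1(V) = d - 2\max \deg M$ (over line subbundles $M$), the inequality $s_1(V) \ge s$ is unaffected by subbundles of non-positive degree; using the parity hypothesis $s \equiv d \pmod 2$, it is equivalent to the nonexistence of an effective divisor $D$ with $\deg D \le k = (d+s-2)/2$ and $L(-D) \subset V$.

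Next, I translate $L(-D) \subset V$ into a cohomological condition on $v$. Pulling the extension back along $L(-D) \hookrightarrow L$ shows that $L(-D)$ lifts to a line subbundle of $V$ if and only if $v \in H^1(X, L^{-1})$ lies in the kernel of the natural map $H^1(X, L^{-1}) \to H^1(X, L^{-1}(D))$. By Serre duality this map is transpose to the inclusion $H^0(X, K_X L(-D)) \hookrightarrow H^0(X, K_X L)$, so $L(-D) \subset V$ if and only if the linear form $v$ on $H^0(X, K_X L)$ annihilates the subspace $H^0(X, K_X L(-D))$. Under $|K_X L|$, the point $p \in X$ corresponds to the evaluation functional whose annihilator is exactly $H^0(X, K_X L(-p))$; a standard double-annihilator argument then identifies, for $D = p_1 + \cdots + p_j$, the projective span of the $p_i$ in $\pp_L$ with the projectivised annihilator of $H^0(X, K_X L(-D))$. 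Combining the two steps yields $\{v \in \pp_L : L(-D) \subset V\} = \mathrm{span}(D)$, and taking the union over effective $D$ of degree $\le k$ recovers $Sec^k X$, giving the desired equivalence.

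The main obstacle will be the geometric translation in the final step: in particular, interpreting the span of a non-reduced divisor via annihilators, and verifying that the union of these spans coincides with the secant variety $Sec^k X$ rather than merely being contained in it (a nesting argument $\mathrm{span}(D) \subset \mathrm{span}(D')$ for $D \le D'$ handles the ``$\le k$ vs.\ $=k$'' distinction). The range hypothesis $4-d \le s \le d$, equivalent to $1 \le k \le d-1$, ensures that $k$ is a nonnegative integer and that the automatic degree-zero subbundle $\cO_X$ does not interfere with the counting of positive-degree subbundles.
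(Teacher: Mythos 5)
Your argument is correct: the paper states this proposition only as a citation to \cite{LaNa} without proof, and your chain of reductions --- positive-degree line subbundles correspond to liftings of $L(-D) \hookrightarrow L$, the lifting exists iff $v$ dies in $H^1(X, L^{-1}(D))$, and Serre duality identifies $\pp(\ker)$ with the span of $D$ under $|K_X L|$ --- is exactly the standard proof, and is precisely the $n=2$, $r=1$ specialization of the machinery the paper develops in Sections 3--4 (Lemma~\ref{cohomology} and Theorem~\ref{geometry}, together with the remark that for $r=1$ the diagram (\ref{exception}) cannot occur, so the implication becomes an equivalence). The two issues you flag (spans of non-reduced divisors, and whether the union of spans of all effective degree-$k$ divisors is all of $Sec^k X$ rather than a proper subset or superset) are settled by your observation that $k \le d-1$ makes $K_X L(-D)$ nonspecial for every effective $D$ of degree $\le k$, so each such $D$ imposes independent conditions, every degree-$k$ divisor spans a $\pp^{k-1}$, and the union of these spans is the image of a $\pp^{k-1}$-bundle over the symmetric product $X^{(k)}$, hence closed and irreducible and therefore equal to $Sec^k X$.
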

By using this, Lange \cite{La2} reproved Nagata's bound $s_1(V) \le
g$: By Riemann-Roch, $\dim \pp H^1(X, L^{-1}) = d+g-2$ for $d \gg
0$. Since the curve $X$ has no secant defect, $\dim Sec^k X = 2k-1$.
Hence every $v \in \pp_L$ lies on $Sec^m X$ for $m = \left\lceil
\frac{d+g-1}{2} \right\rceil$. By Proposition \ref{LN}, then,
\[
s_1(V) \le 2m-d = 2 \left\lceil \frac{d+g-1}{2} \right\rceil - d \le
g.
\]
\indent In this paper, we reprove the Hirschowitz bound by
establishing a statement which generalizes Proposition \ref{LN} to
the case of rank $r$ subbundles of rank $n$ bundles for any pair $r$
and $n$ (Theorem \ref{geometry}). The key observation is that the
locus of rank 1 vectors inside a certain scroll over $X$ plays the
role of the curve $X$ in the rank 2 case. This idea recently
appeared in the second author's work on symplectic bundles of rank 4
over a curve. Since he studied the case of genus 2, the involved
data was secant lines (\cite{Hit}, Lemma 8). Here we fully
investigate the idea for curves of higher genus and higher secant
varieties.

In Section 2, we arrange a basic setup for our study. In particular,
the locus $\Delta$ of rank 1 vectors inside the scroll is defined.

In Section 3, we briefly recall the dictionary for interpreting the
secants of the scrolls in terms of the data of elementary
transformations.

In Section 4, the key technical result on the lifting of elementary
transformations is proved. The criterion on the lifting is given in
terms of the secant varieties of $\Delta$.

In Section 5, we apply this criterion to get the Hirschowitz bound.
Once we have the criterion on the lifting, it is straightforward to
get the expected bound on $s_r$. The only technical point is to show
that the variety $\Delta$ has no secant defect. This requires some
argument appealing to the Terracini lemma. The proof of the secant
non-defectiveness of $\Delta$ is completed by applying Hirschowitz'
lemma saying that the tensor product of two general bundles is
nonspecial.

\section{The rank 1 locus}
Let $X$ be a smooth algebraic curve of genus $g \ge 2$. Let $V$ be a
vector bundle over $X$ of rank $n >1$. Firstly, we establish a fact
which justifies the validity of the forthcoming discussion on
extension spaces.
\begin{lemma} \label{setting}
\begin{enumerate}
\renewcommand{\labelenumi}{(\roman{enumi})}
\item Suppose that $V$ is a general stable bundle in $U(n, d)$ where
$d> (2g-1)n$. For every positive integer $r$ with $1 \le r < n$, the
bundle $V$ fits into an exact sequence
\[
0 \to \cO_X^{\oplus (n-r)} \to V \to F \to 0
\]
for some $F \in U(r, d)$.
\item Under the same assumption on $n$, $r$ and $d$, a general stable
bundle $V$ is fitted into an exact sequence
\begin{equation} \label{basicext}
0 \to E \to V \to F \to 0
\end{equation}
for a general $E \in U(n-r,0)$ and a general $F \in U(r,d)$.
\end{enumerate}
\end{lemma}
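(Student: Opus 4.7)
The plan is to handle both parts by a parameter count on extension spaces, combined with Lange's theorem that a generic extension of stable bundles with distinct slopes is stable.

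For part (i), I would parametrize the construction by pairs $(F,\xi)$ with $F \in U(r,d)$ and $\xi \in \mathrm{Ext}^1(F, \cO_X^{\oplus (n-r)})$, each such pair producing an extension $0 \to \cO_X^{\oplus (n-r)} \to V \to F \to 0$. Since $\mu(F) = d/r > 2g-1$ and $F$ is stable, one has $h^0(F^*) = 0$, and Riemann--Roch then gives $\dim \mathrm{Ext}^1(F, \cO_X^{\oplus (n-r)}) = (n-r)(d + r(g-1))$. Dividing by the natural $\mathrm{GL}(n-r)$-action on the trivial summand, the space of equivalence classes of extensions has dimension
\[ r^2(g-1) + 1 + (n-r)(d + r(g-1)) - (n-r)^2, \]
and a short manipulation rewrites this as $\dim U(n,d) + (n-r)\bigl[d - n(g-1) - (n-r)\bigr]$; the bracketed term is strictly positive under the hypothesis $d > (2g-1)n$. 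Since $\mu(\cO_X) = 0 < \mu(F)$, Lange's theorem guarantees that a generic $\xi$ produces a stable $V$, so the natural map from the parameter space to $U(n,d)$ lands in the stable locus. Combined with irreducibility of $U(n,d)$, the strict dimension inequality forces this map to be dominant, and a general $V \in U(n,d)$ is realized by such an extension with $F$ correspondingly general in $U(r,d)$.

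For part (ii), I would run the same argument with $\cO_X^{\oplus (n-r)}$ replaced by a generic $E \in U(n-r,0)$ and $\mathrm{Ext}^1(F,E)$ in place of $\mathrm{Ext}^1(F, \cO_X^{\oplus (n-r)})$. Here I use that $F^* \otimes E$ is semistable of slope $-d/r < 0$, so that $h^0(F^* \otimes E) = 0$ and $\dim \mathrm{Ext}^1(F,E) = (n-r)(d + r(g-1))$ once again. A parallel dimension count shows the parameter space of triples $(E,F,\xi)$ exceeds $\dim U(n,d)$ under the hypothesis, and Lange's theorem once more ensures that a generic $\xi$ gives stable $V$ (now using $\mu(E) = 0 < \mu(F)$). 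Hence the map to $U(n,d)$ is dominant, and the strict dimension inequality ensures that the projections to $U(n-r,0)$ and $U(r,d)$ are also dominant, so $E$ and $F$ may be chosen generic in their respective moduli spaces as $V$ varies generically.

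The chief technical obstacle is not the dimension count itself but the step of upgrading ``dominant map'' to ``a general $V$ fits into such an extension with $E$ and $F$ themselves general.'' This requires irreducibility of $U(n,d)$ and of the parameter spaces, together with the strictness of the dimension inequalities coming from $d > (2g-1)n$, to rule out that the construction factors through a proper closed subvariety of $U(n-r,0) \times U(r,d)$.
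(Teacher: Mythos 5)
Your route is genuinely different from the paper's, but it has a gap at its central step. You assert that because the parameter space of pairs $(F,\xi)$ is irreducible of dimension strictly greater than $\dim U(n,d)$, the classifying map to $U(n,d)$ must be dominant. That inference is false in general: a morphism from an irreducible variety of large dimension can perfectly well have image a proper closed subvariety (with correspondingly large fibres). A dimension count on the source alone never yields dominance; you must also control the fibres or the differential. Here the fibre over a stable $V$ in the image is governed by the space of subbundle inclusions $\cO_X^{\oplus(n-r)}\hookrightarrow V$, i.e.\ an open subset of the Grassmannian of $(n-r)$-planes in $H^0(X,V)$, of dimension $(n-r)\bigl(h^0(X,V)-(n-r)\bigr)=(n-r)\bigl(d-n(g-1)-(n-r)\bigr)$ since $V$ is nonspecial. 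Subtracting this from your parameter count gives image dimension exactly $n^2(g-1)+1=\dim U(n,d)$ --- so the argument can be completed, but only after this fibre computation, which is precisely the content you replaced with an invalid general principle. The same non sequitur recurs in (ii), where you claim the projections to $U(n-r,0)$ and $U(r,d)$ are dominant ``by the strict dimension inequality''; a priori every general $V$ might only arise from special pairs $(E,F)$, and one needs an openness/irreducibility argument over $U(n-r,0)\times U(r,d)$ (this is what the paper means by ``deforming $\cO_X^{\oplus(n-r)}$ and $F$''). You should also be careful that the stability result you attribute to Lange is usually stated for \emph{stable} $E$ and $F$, whereas $\cO_X^{\oplus(n-r)}$ is only polystable.

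For comparison, the paper avoids the dominance question entirely: it shows via the Serre--Atiyah construction that \emph{every} stable $V$ with $d>(2g-1)n$ is globally generated and that a general $(n-r)$-dimensional space of sections misses the incidence locus $N=\bigcup_x \ker\bigl(H^0(X,V)\to V_x\bigr)$, hence every such $V$ literally contains $\cO_X^{\oplus(n-r)}$ as a subbundle. The stability of the quotient $F$ is then handled by exhibiting an irreducible family (a tower of extension spaces) containing all stable $V$, on which stability of $F$ is an open condition that holds somewhere. That approach buys surjectivity rather than mere dominance and needs no stability theorem for generic extensions; your approach, once the fibre count is supplied, is shorter but proves only the generic statement and leans on the nontrivial input that a generic extension of $F$ by $E$ is stable.
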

\begin{proof}
(The following argument was kindly informed to us by Peter
Newstead.)

The case when $r=1$ was dealt with in the paper of Atiyah (\cite{At}
Theorem 3, where it is attributed to J.-P.\ Serre): Under the
assumptions, we see that $h^0(X, V) - h^0(X, V(-x)) = n$ for each $x
\in X$. Hence $V$ is generated by global sections and there is a
surjection $H^0(X, V) \otimes \cO_X \to V$. For each $x \in X$, let
$N_x$ be the kernel of the the evaluation map $H^0(X, V) \to V_x$.
The union $\displaystyle N = \bigcup_{x \in X} N_x $ has dimension
at most $h^0(X, V) - n +1$, so there is an $(n-1)$-dimensional
subspace $P \subset H^0(X, V)$ such that $ P \cap N = \{ 0 \}$. This
means that there is an exact sequence
\[
0 \to P \otimes \cO_X \to V \to \det V \to 0.
\]
\indent For $r >1$, the same argument yields an exact sequence
\[
0 \to \cO_X^{\oplus  (n-r)} \to V \to F \to 0
\]
for some $F$ of rank $r$. Here the stability of $F$ is not
guaranteed, but $H^0(X, F^*) = 0$ by the stability of $V$. It is
well-known that the bundles $F$ of fixed rank and degree satisfying
$H^0(X, F^*) = 0$ form a bounded family. To get an irreducible
family, apply the above Serre--Atiyah construction to those $F$:
Consider the irreducible variety $\mathcal{T}$ parameterizing all
the extensions of the form
\[
0 \to \cO_X^{\oplus (r-1)} \to F \to L \to 0
\]
where $L \in Pic^d(X)$. From the above construction, every $F$
satisfying $H^0(X, F^*) = 0$, in particular every stable bundle $F
\in U(r,d)$, fits into some exact sequence in $\mathcal{T}$. Now
consider the family of extensions
\[
0 \to \cO_X^{\oplus (n-r)} \to V \to F \to 0
\]
for all $F$ fitting into some exact sequence in $\mathcal{T}$. This
is again parameterized by an irreducible family and contains all
stable bundles $V \in U(n,d)$. The proof of (i) is completed by
noting that $F$ is stable for the general member of the family.

Statement (ii) follows from (i) by deforming $\cO_X^{\oplus(n-r)}$
in $U(n-r,0)$ and $F$ in $U(r,d)$ respectively.
\end{proof}
Now we study the space $\pp := \pp H^1( X, Hom(F, E))$ associated to
the exact sequence (\ref{basicext}).
We consider the ruled variety $\pi: \pp Hom(F,E) \to X$. Let
$\cO(1)$ be the line bundle on $\pp Hom(F,E)$ such that $\pi_*
\cO(1) \cong Hom(F,E)^*$. By Serre duality and the projection
formula,
\begin{eqnarray*}
\pp &\cong& \pp H^0(X,  K_X \otimes Hom(F,E)^*)^\vee \\
&\cong& \pp H^0 (\pp Hom(F,E), \pi^*K_X \otimes \cO(1))^\vee.
\end{eqnarray*}
Hence there is a rational map
\[
\phi: \pp Hom(F,E) \dashrightarrow \pp
\]
given by the complete linear system $|\pi^*K_X \otimes \cO(1)|$.
\begin{lemma} \label{embedding} \ \
Let $E \in U(n-r,0)$ and $F \in U(r,d)$ where $d > (2g-1)r.$ Then
$\phi$ is an embedding.
\end{lemma}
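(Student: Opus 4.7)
My plan is to show that the line bundle $\cL := \pi^*K_X \otimes \cO(1)$ is very ample on $\pp Hom(F,E)$; since $\phi$ is the morphism associated with the complete linear system $|\cL|$, this makes $\phi$ a closed immersion. Set $W := Hom(F,E) = F^* \otimes E$; then $\pi_*\cL = K_X \otimes W^*$ by the projection formula, so $H^0(\pp W, \cL) \cong H^0(X, K_X \otimes W^*)$. The strategy is to reduce very ampleness to a single cohomological statement on the base curve and verify it by a slope comparison that exploits the hypothesis $d > (2g-1)r$.

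The reduction I have in mind is the standard one for a projective bundle over a smooth curve: $\cL$ is very ample once
\[
H^1\bigl(X,\, K_X \otimes W^* \otimes \cO_X(-D)\bigr) = 0 \qquad \text{for every effective divisor } D \subset X \text{ with } \deg D = 2.
\]
On each fibre $\pp W_x \cong \pp^{r(n-r)-1}$, $\cL$ restricts to $\cO(1)$ which is already very ample, so separation of points and tangent vectors within a single fibre reduces to the global generation of $K_X \otimes W^*$. Separation of points in distinct fibres $\pi^{-1}(x_1), \pi^{-1}(x_2)$ reduces to surjectivity of $H^0(K_X W^*) \twoheadrightarrow (K_X W^*)_{x_1} \oplus (K_X W^*)_{x_2}$, the case $D = x_1 + x_2$. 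Separation of tangent vectors transverse to the fibre reduces to surjection onto the $1$-jet space $K_X W^*|_{2x}$, the case $D = 2x$. Each of these follows from the displayed $H^1$-vanishing via the exact sequence $0 \to K_X W^*(-D) \to K_X W^* \to K_X W^*|_D \to 0$.

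To verify the vanishing, I would apply Serre duality,
\[
H^1\bigl(X, K_X W^*(-D)\bigr) \cong H^0\bigl(X, W \otimes \cO_X(D)\bigr)^\vee = \Hom\bigl(F, E(D)\bigr)^\vee,
\]
reducing the problem to showing $\Hom(F, E(D)) = 0$ for every effective $D$ of degree $2$. Since $E$ and $F$ are semistable and tensoring by a line bundle preserves semistability, $E(D)$ is semistable with $\mu(E(D)) = \mu(E) + \deg D = 2$, while by hypothesis $\mu(F) = d/r > 2g - 1 \ge 3$. A nonzero morphism between semistable bundles satisfies $\mu(\text{source}) \le \mu(\text{target})$, so the strict inequality $\mu(F) > \mu(E(D))$ forces the Hom-vanishing. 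The main obstacle is to set up the cohomological reduction correctly; once that is in place, the slope estimate is routine and uses exactly the hypotheses $d > (2g-1)r$ and $g \ge 2$, nothing more.
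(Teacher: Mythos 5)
Your proof is correct and follows essentially the same route as the paper: both reduce very ampleness of $\pi^*K_X \otimes \cO(1)$ to the surjectivity of $H^0(K_X \otimes W^*) \to K_X \otimes W^*|_D$ for degree-$2$ divisors $D$, apply Serre duality to turn this into the vanishing of $H^0(X, Hom(F,E)(D))$, and conclude from semistability of $Hom(F,E)$ with slope $-d/r < -2$. The only difference is that you spell out the jet-separation reduction and the slope inequality for maps between semistable bundles in more detail than the paper does.
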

\begin{proof}
It is easy to see that $\phi$ is an embedding if
\[
h^0(X, K_X \otimes Hom(F,E)^*)  - h^0(X, K_X(-x-y) \otimes
Hom(F,E)^*) = 2 r(n-r)
\]
for any $x,y \in X$. Via Serre duality, this holds if
\[
h^0(X, Hom(F,E)) = h^0 (X, Hom(F,E) \otimes \cO_X(x+y)) = 0.
\]
By our assumptions, $Hom(F,E)$ is semistable of slope $-d/r$, which
is smaller than $-2$.  Therefore we get the above cohomology
vanishing.
\end{proof}
Now we introduce the key object of our discussion: the locus
$\Delta$ defined by rank 1 vectors, or decomposable vectors, in the
scroll $\pp Hom(F,E)$.
\begin{definition} \label{rank1}
{\rm The locus $\Delta$ is the subvariety of $\pp Hom(F,E) $ defined
by $\Delta = \bigcup_{x \in X} \Delta_x$, where}
\[
\Delta_x = \{ [\mu \otimes e] \ : \ \mu \in F_x^*, \ e \in E_x \} \
\subset \ \pp Hom(F,E)|_x \cong \pp (F_x^* \otimes E_x).
\]
\end{definition}
\begin{remit}
It is easily seen that $\dim \Delta = n-1$. Also $\Delta \cong \pp
F^*$ if $r=n-1$. So the locus $\Delta$ reduces to the curve $X$ when
we consider line subbundles of rank 2 vector bundles.
\end{remit}

\section{Scrolls and elementary transformations}

The map $\phi: \pp Hom(F,E) \to \pp$ in the previous section can be
understood in terms of elementary transformations. In this section,
we briefly recall this connection.

Let $W$ be a vector bundle over $X$. We consider two kinds of
elementary transformations of $W$. Firstly, for any nonzero $\mu \in
W_x^*$, we get an exact sequence
\begin{equation} \label{associated}
0 \to \widetilde{W} \to W \to \cc_x \to 0
\end{equation}
whose restriction to $x$ gives
\[
0 \to \cc \to \widetilde{W}_x \to W_x \stackrel{\mu}{\to} \cc \to 0.
\]
The locally free sheaf or vector bundle $\widetilde{W}$ is called
the {\it elementary transformation of $W$ associated to $\mu$}.

Next, for any nonzero $w \in W_x$, there is a unique extension
\begin{equation} \label{at}
0 \to W \to \widehat{W} \to \cc_x \to 0
\end{equation}
which restricts to $x$ as
\[
0 \to \cc \cdot w  \to W_x \to \widehat{W}_x \to \cc \to 0.
\]
We call the vector bundle $\widehat{W}$ the {\it elementary
transformation of $W$ at $w$}.

These two processes are dual to each other: taking the dual of
(\ref{associated}), we get
\[
0 \to W^* \to (\widetilde{W})^* \to \cc_x \to 0.
\]
Here $(\widetilde{W})^*$ is nothing but the elementary
transformation of $W^*$ at $\mu$: under the above notations,
\[
\left( \widetilde{W} \right)^* \cong \widehat{(W^*)}.
\]
Now consider the map
\[
\phi: \pp W \to \pp H^1(X, W)
\]
given by the linear system $|\pi^*K_X \otimes \cO(1)|$, where $\pi:
\pp W \to X$ is the projection and $\cO(1)$ is the bundle on $\pp W$
satisfying $\pi_* \cO(1) \cong W^*$. As we noticed in Lemma
\ref{embedding}, if $W$ is semistable of slope less than $-2$, then
$\phi$ embeds $\pp W$ into $\pp H^1(X, W)$ as a scroll.

This embedding can be understood in terms of the elementary
transformations as follows. For each $[w] \in \pp W$, we consider
the elementary transformation $\widehat{W}$ of $W$ at $w$. From the
long exact sequence associated to (\ref{at}), we have the
1-dimensional kernel of the surjection $H^1(X, W) \to H^1(X,
\widehat{W})$  and this gives the image point $\phi ([w]) \in \pp
H^1(X, W)$. Accordingly, the secant line joining two points
$\phi([w_1])$ and $\phi([w_2])$ of $\pp W$ is given by
\[
\overline{[w_1][w_2]} \ = \ \pp \ker [H^1(X, W) \to H^1(X,
\widehat{W})]
\]
where $\widehat{W}$ is the elementary transformation of $W$ at $w_1$
{\bf and} $w_2$.

We will also need the description of the embedded tangent spaces of
$\pp W$. For each nonzero $w \in W_x$, we consider the following
diagram
\[
\begin{CD}
@. @. \widehat{W} \otimes \cO_X(x)|_x @= (\cc_x)^{\rk W} \\
@. @. @AAA @AAA @.\\
0 @>>> W  @>>>  \widehat{W} \otimes \cO_X(x) @>>>  \tau   @>>>  0   \\
@.   @|     @AAA         @AAA               @.  \\
0  @>>>    W                @>>>  \widehat{W}   @>>> \cc_x @>>> 0
\end{CD}
\]
The $\left( \rk (W) + 1 \right)$-dimensional dimensional kernel of
the map
\[
H^1(X,W) \to H^1(X, \widehat{W} \otimes \cO_X(x))
\]
gives the embedded tangent space $\TT_{[w]} \pp W$ in $\pp H^1(X,
W)$. This can be seen from the blowing-up and blowing-down
description of the elementary transformations. For details, we refer
the reader to \cite{Ma} and \cite{Tj}.

\section{A criterion for lifting of elementary transformations}
Consider an exact sequence
\begin{equation} \label{original}
0 \to E \to V \to F \to 0,
\end{equation}
where $E \in U(n-r,0)$ and $F \in U(r,d)$. This corresponds to a
point $v \in \pp Hom(F,E) = \pp$. In this section, we find a
geometric criterion to determine when an elementary transformation
$\widetilde{F}$ of $F$ has a lifting $\widetilde{F} \to V$ such that
the following diagram commutes.
\begin{equation} \label{lifting}
\xymatrix{ 0 \ar[r] & E \ar[r] & V \ar[r]  & F \ar[r] & 0 \\
& & & \widetilde{F} \ar[ul] \ar[u]}
\end{equation}
Let $\widetilde{F}$ be an elementary transformation of $F$ given by
the sequence
\begin{equation} \label{elem}
0 \to \widetilde{F} \to F \to \tau \to 0
\end{equation}
for some torsion sheaf $\tau$ of degree $k$. Dualizing this, we get
\begin{equation} \label{dual}
0 \to F^* \to \widetilde{F}^* \to \tau' \to 0.
\end{equation}
Tensoring by $E$, we obtain
\begin{equation} \label{Elem}
0 \to Hom(F,E) \to  Hom(\widetilde{F},E) \to (\tau')^{\oplus(n-r)}
\to 0.
\end{equation}
In this setting, we have the following cohomological criterion due
to Narasimhan and Ramanan.
\begin{lemma} \label{cohomology} \ {\rm (\cite{NaRa}, Lemma 3.1)  }
The elementary transformation $\widetilde{F} \subset F$ lifts to a
map $\widetilde{F} \to V$ so that the diagram (\ref{lifting}) is
commutative if and only if $v \in \pp = \pp H^1(X, Hom(F,E))$ lies
in $\pp (\ker \beta)$ for the map
\begin{equation} \label{beta}
\beta: H^1(X, Hom(F,E)) \to H^1 (X, Hom(\widetilde{F},E))
\end{equation}
associated to the exact sequence (\ref{Elem}). \qed
\end{lemma}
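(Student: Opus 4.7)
The plan is to recast the lifting condition as a splitting criterion for a pullback extension, and then identify the cohomological map $\beta$ with the functorial pullback of extension classes.

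First I would identify $H^1(X, \Hom(F,E))$ with $\mathrm{Ext}^1(F, E)$, and similarly with $\widetilde{F}$ in place of $F$. This is legitimate because $\Hom(F,E) = F^* \otimes E$ is locally free, so the local-to-global spectral sequence for $\mathrm{Ext}$ collapses. Under this identification, $v$ represents the class of the extension (\ref{original}).

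Next I would verify that the cohomological map $\beta$ in (\ref{beta}) coincides with the $\mathrm{Ext}$-pullback $i^{*}:\mathrm{Ext}^1(F, E) \to \mathrm{Ext}^1(\widetilde{F}, E)$ along the inclusion $i : \widetilde{F} \hookrightarrow F$ of (\ref{elem}). Indeed, the sheaf inclusion $\Hom(F,E) \hookrightarrow \Hom(\widetilde{F}, E)$ appearing in (\ref{Elem}) is precisely $\Hom(i, \id_E)$, obtained by dualising $i$ and tensoring with $E$; by naturality of the identification $H^1 = \mathrm{Ext}^1$, the induced map on $H^1$ is exactly $i^{*}$.

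Given this, form the fibre product $V' := V \times_F \widetilde{F}$, which sits in an exact sequence $0 \to E \to V' \to \widetilde{F} \to 0$ whose class in $\mathrm{Ext}^1(\widetilde{F}, E)$ is $i^{*}(v) = \beta(v)$. By the universal property of the fibre product, a morphism $\widetilde{F} \to V$ whose composition with $V \to F$ equals $i$ is the same datum as a section $\widetilde{F} \to V'$ of the top row, i.e.\ a splitting. Hence a lifting as in (\ref{lifting}) exists iff the pullback extension splits, iff $\beta(v) = 0$; since this condition depends only on the line spanned by $v$ in $H^1(X, \Hom(F,E))$, it is equivalent to $v \in \pp(\ker \beta)$. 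The only substantive point is the naturality identification of $\beta$ with the $\mathrm{Ext}$-pullback; I expect this to be the main (though still modest) obstacle, since the remaining implications are then purely formal homological algebra.
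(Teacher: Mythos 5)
Your proof is correct: the identification of $\beta$ with the Yoneda pullback $i^{*}$ along $\widetilde{F}\hookrightarrow F$ (valid since both sheaves are locally free, so $H^1(X,\Hom(-,E))\cong\mathrm{Ext}^1(-,E)$ naturally) and the fibre-product splitting criterion together give exactly the stated equivalence. The paper itself offers no proof, simply citing Narasimhan--Ramanan, and your argument is essentially the standard one underlying that reference, so there is nothing further to compare.
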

%
For further discussions, it will be convenient to have an explicitly
constructed parameter space of the elementary transformations of
$F$.
\begin{lemma} \label{quot}
For a vector bundle $F$ and a fixed number $k>0$, the elementary
transformations $\widetilde{F} \hookrightarrow F$ with
$\deg(F/\widetilde{F}) = k$ are parameterized by a projective
variety $Q_k$ of dimension at most $k \cdot \rk (F)$.
\end{lemma}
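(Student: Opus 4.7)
The plan is to identify $Q_k$ with the Grothendieck Quot scheme $\mathrm{Quot}^{k,0}(F)$ parameterizing torsion quotients $F \twoheadrightarrow \tau$ of length $k$. Any elementary transformation $\widetilde F \hookrightarrow F$ with $\deg(F/\widetilde F) = k$ is equivalent to such a quotient; conversely, the kernel of any such quotient is a torsion-free, hence locally free, subsheaf of $F$ since $X$ is a smooth curve. By Grothendieck's construction, $\mathrm{Quot}^{k,0}(F)$ is a projective scheme over $\cc$ (its Hilbert polynomial is the constant $k$), so I would take $Q_k$ to be its underlying reduced variety.

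For the dimension bound, I would apply the standard deformation-theoretic computation of the Zariski tangent space to the Quot scheme at an arbitrary point $[\widetilde F \hookrightarrow F \twoheadrightarrow \tau]$, which yields
\[ T_{[\widetilde F]} Q_k \; \cong \; \Hom(\widetilde F, \tau). \]
Since $\widetilde F$ is locally free of rank $r := \rk(F)$, the sheaf $\mathcal{H}om(\widetilde F, \tau) \cong \widetilde F^* \otimes \tau$ is locally isomorphic to $\tau^{\oplus r}$, and is therefore itself a torsion sheaf of total length $rk$. Consequently $\Hom(\widetilde F, \tau) = H^0(X, \widetilde F^* \otimes \tau)$ has dimension $rk$ over $\cc$, which bounds the dimension of every irreducible component of $Q_k$ passing through $[\widetilde F]$.

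I expect no serious obstacle; the content is standard Quot-scheme formalism plus a tangent space count. As a sanity check, the open stratum of $Q_k$ consisting of those $\widetilde F$ whose cokernel has the form $\tau = \bigoplus_{i=1}^{k} \cc_{x_i}$ for $k$ distinct points $x_i$ is naturally an open subset of the symmetric product $\mathrm{Sym}^k \pp(F^*)$, which has dimension $k \cdot \dim \pp(F^*) = kr$. So the bound is sharp on this main component, while all remaining strata (degenerate configurations where points collide or higher-order torsion occurs) have dimension at most $kr$ by the tangent space estimate above.
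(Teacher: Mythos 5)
Your proof is correct, but it takes the route that the paper explicitly mentions and then declines to follow. The paper's proof opens by noting that one \emph{could} use the Quot scheme, and then instead builds $Q_k$ by hand as a tower of projective bundles: $Q_1 = \pp F^*$, and inductively $Q_{k+1} = \pp \cF_k^*$ for a classifying bundle $\cF_k$ on $Q_k \times X$, each stage adding $\rk(F)$ to the dimension. Your version gets projectivity for free from Grothendieck's construction and the dimension bound from the standard tangent-space isomorphism $T_{[\widetilde F]}\mathrm{Quot} \cong \Hom(\widetilde F, \tau) \cong H^0(X, \widetilde F^* \otimes \tau)$, a torsion sheaf of length $k\cdot\rk(F)$; this is shorter and in fact bounds every component uniformly (and, since $\mathrm{Ext}^1(\widetilde F,\tau)=0$ on a curve, even shows the Quot scheme is smooth of dimension exactly $k\cdot\rk(F)$). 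What the paper's explicit tower buys is twofold: the parameter space is visibly \emph{irreducible} (so that ``a general $\widetilde F \in Q_k$'' in Lemma \ref{general} makes sense without further argument), and it comes equipped with the projections $\pi_1,\dots,\pi_k : Q_k \to X$ that the proof of Lemma \ref{general}(i) uses to identify the support of $\tau$ and the description of $\widetilde F$ as an intersection of kernels $\ker[\mu_i : F \to \cc_{x_i}]$. If you adopt the Quot-scheme version, you would want to add a word on irreducibility (or single out the component dominating $\mathrm{Sym}^k X$) before relying on genericity statements downstream; your observation that the distinct-support stratum is an open subset of $\mathrm{Sym}^k \pp(F^*)$ of dimension $k\cdot\rk(F)$ is the right starting point for that.
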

\begin{proof}
One can take the Quot scheme of $F$ parameterizing the surjections
$F \to \tau$, where $\tau$ runs over the space of torsion sheaves of
degree $k$. A more explicit parameter space can be constructed as
follows.

If $k=1$ then $Q_1 = \pp F^*$: Any linear functional $\mu : F_x \to
\cc$ gives an elementary transformation $\widetilde{F}$ whose
quotient $F/\widetilde{F}$ has degree 1. Conversely, any elementary
transformation $\widetilde{F}$ with degree 1 quotient determines an
element of $F^*$ up to a constant. Furthermore, there is a bundle
$p_1: \cF_1 \to Q_1 \times X$ such that for each $\mu \in Q_1$, the
bundle $\widetilde{F} = \cF_1|_{[\mu] \times X}$ is the kernel of $F
\stackrel{\mu}{\to} \cc_x$.
Indeed, let $\pi_X: Q_1 \to X$ be the projection and let $\bar{Q}_1$
be the copy of $Q_1$ embedded in $Q_1 \times X$ as the graph of
$\pi_X$. Also let $\pi_{Q_1}:Q_1 \times X \to Q_1$  be the
projection. Then the bundle $\cF_1$ is defined by the following
sequence over $Q_1 \times X$:
\[
0 \to \cF_1 \to \pi_X^* F \otimes \pi_{Q_1}^* \cO(-1) \to
\cO_{\bar{Q}_1} \to 0.
\]
Here the quotient map is given by the composition
\[
\pi_X^* F \otimes \pi_{Q_1}^* \cO(-1) \ \to \ \pi_X^* F \otimes
\pi_{Q_1}^* \cO(-1) \otimes \cO_{\bar{Q}_1} \ \to \ \cO_{\bar{Q}_1}
\]
where the second map is the pairing $f \otimes \mu \mapsto \mu(f)$
for $f \in F_x$ and $\mu \in F_x^*$ for some $x \in X$.

By induction, assume that there is a variety $Q_k$ parameterizing
the elementary transformations of $F$ with $\deg (F/ \widetilde{F})
= k$, together with a classifying bundle $p_k: \cF_k \to Q_k \times
X$. Each $\mu \in \pp \cF_k^*$ represents a bundle $F_k$ in $Q_k$, a
point $x \in X$ and a codimension 1 subspace of $F_x$, or
equivalently an elementary transformation $ \tilde{F_k} = \ker [F_k
\to \cc_x]$. Hence the space $Q_{k+1} := \pp \cF_k^*$ parameterizes
the elementary transformations $\widetilde{F}$ of $F$ with quotient
of degree $k+1$.

Also, we have a classifying bundle $\cF_{k+1}$ over $Q_{k+1} \times
X$: let $q^{k+1}_k$ and $q^{k+1}_X$ be the compositions $Q_{k+1} \to
Q_k \times X \to Q_k$ and $Q_{k+1} \to Q_k \times X \to X$
respectively. Let $\bar{Q}_{k+1}$ be the copy of $Q_{k+1}$ embedded
in $Q_{k+1} \times X$ as the graph of $q_X^{k+1}$. Let
$\pi_{Q_{k+1}}: Q_{k+1} \times X \to Q_{k+1}$ be the projection.
Then the bundles $\cF_{k+1}$ is defined by the sequence over
$Q_{k+1} \times X$:
\[
0 \to \cF_{k+1} \to (q_k^{k+1} \times id_X)^*\cF_k \otimes
\pi_{Q_{k+1}}^*\cO(-1) \to \cO_{\bar{Q}_{k+1}} \to 0.
\]
In this construction, the parameter space is given by a tower of
projective bundles starting from $Q_1 = \pp F^* \to X$, which is a
projective variety. Moreover, at each step the dimension of $Q_k$
increases by $\rk (F)$, so $\dim Q_k = k \cdot \rk(F) $.
\end{proof}
\begin{lemma} \label{general} \ \
Assume $E \in U(n-r,0)$ and $F \in U(r,d)$ where $d > (2g-1)r$, so
that by Lemma \ref{embedding}, the projective bundle $\pp Hom(F,E)$
is embedded in $\pp$ as a scroll. Consider the diagram
(\ref{lifting}) and the associated map (\ref{beta}). For a general
$\widetilde{F}$ in $Q_k$:
\begin{enumerate}
\renewcommand{\labelenumi}{(\roman{enumi})}
\item The support of $\tau = F / \widetilde{F}$ consists of $k$
distinct points $x_1, x_2, \ldots, x_k$ of $X$. Moreover,
$\widetilde{F}$ is the intersection of the subsheaves $\ker \left[
\mu_i: F \to \cc_{x_i} \right]$ for some $\mu_i \in F_{x_i}^*$, $i =
1, 2, \ldots, k$.
\item The subspace $ \pp (\ker \beta) \in \pp$ is the join of the $k$
distinct linear spaces
\[ \pp (\mu_i \otimes E_{x_i}) \subset \pp Hom(F,E)_{x_i} \]
for $i=1,2, \ldots, k$. \end{enumerate}
\end{lemma}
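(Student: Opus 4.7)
The plan is to handle the two assertions separately. For (i), I would use the tower construction of $Q_k$ from Lemma \ref{quot}. A general point of $Q_k$ is obtained by a generic choice, at each step $j = 1, \ldots, k$, of a point in the projective bundle $\pp \cF_{j-1}^*$; geometrically this is a general point $x_j \in X$ together with a general hyperplane in the fiber at $x_j$ of the running elementary transformation $\widetilde{F}_{j-1}$. For such a generic choice, $x_j$ is distinct from $x_1, \ldots, x_{j-1}$, in which case $(\widetilde{F}_{j-1})_{x_j} = F_{x_j}$ and the functional defining the $j$-th transformation is some $\mu_j \in F_{x_j}^*$. Iterating, the cumulative quotient $\tau = F/\widetilde{F}$ decomposes as $\bigoplus_{i=1}^k \cc_{x_i}$ and $\widetilde{F} = \bigcap_{i=1}^k \ker(\mu_i \colon F \to \cc_{x_i})$.

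For (ii), with the $x_i$ distinct, $(\tau')^{\oplus(n-r)}$ is the direct sum of the $(n-r)$-dimensional skyscrapers $E_{x_i}$, and (\ref{Elem}) becomes
\[
0 \to Hom(F, E) \to Hom(\widetilde{F}, E) \to \bigoplus_{i=1}^k E_{x_i} \to 0.
\]
For a generic $\widetilde{F}$, semistability of $\widetilde{F}$ persists (for $k$ not too large relative to $d$), and a slope calculation gives $H^0(Hom(\widetilde{F}, E)) = 0$. The associated long exact sequence then identifies $\ker \beta$ with the image of an injective connecting homomorphism
\[
\delta \colon \bigoplus_{i=1}^k E_{x_i} \to H^1(Hom(F, E)).
\]
Since the supports of the summands are disjoint, the extension sequence decomposes as a sum of $k$ single-point extensions and $\delta$ splits as $\delta = \bigoplus_i \delta_i$, where $\delta_i$ is the connecting map for the single-point extension of $Hom(F, E)$ by $E_{x_i}$.

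The remaining step is to identify $\delta_i(E_{x_i})$ with $\phi(\pp(\mu_i \otimes E_{x_i})) \subset \pp$ for each $i$. In a local trivialisation near $x_i$ with parameter $t$ and basis $f_1, \ldots, f_r$ of $F$ arranged so that $\mu_i = f_1^*$, the subsheaf $\widetilde{F}$ is locally spanned by $tf_1, f_2, \ldots, f_r$. A lift of $e \in E_{x_i}$ to $Hom(\widetilde{F}, E)$ is given by $tf_1 \mapsto \tilde{e}$, $f_j \mapsto 0$ for $j \ge 2$, with $\tilde{e}$ any local section of $E$ satisfying $\tilde{e}(x_i) = e$. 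Comparing with the zero lift on $X \setminus \{x_i\}$ produces the \v{C}ech cocycle $\mu_i \otimes (\tilde{e}/t)$ with values in $Hom(F, E)$ on the punctured disk. This is precisely the cocycle representing $\phi([\mu_i \otimes e])$ under the scroll embedding recalled in Section 3, as one sees either by tracing through the elementary-transformation sequence (\ref{at}) at $\mu_i \otimes e$ or via the Serre duality residue pairing between $H^1(Hom(F, E))$ and $H^0(K_X \otimes F \otimes E^*)$. Summing over $i$ shows that $\pp(\ker \beta)$ is the linear join of the $k$ subspaces $\pp(\mu_i \otimes E_{x_i})$.

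The main technical obstacle is this last identification $\delta_i(e) \leftrightarrow \phi([\mu_i \otimes e])$: it requires matching two a priori distinct constructions of classes in $H^1(Hom(F, E))$ — the connecting homomorphism of the elementary-transformation sequence for $\widetilde{F}$, and the scroll embedding of Section 3 — through a careful local \v{C}ech calculation. A secondary but essential point is the vanishing $H^0(Hom(\widetilde{F}, E)) = 0$ at the generic point of $Q_k$; without it, $\delta$ would only surject onto $\ker \beta$ rather than being an isomorphism, and the dimension count needed to conclude that the join actually exhausts $\pp(\ker \beta)$ would break down.
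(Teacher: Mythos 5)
Your proposal is correct and follows essentially the same route as the paper: part (i) via the tower construction of $Q_k$ and the complement of the big diagonal, and part (ii) by identifying $\ker\beta$ with the image of the connecting map and matching it against the scroll embedding. The only difference is that you carry out explicitly the local \v{C}ech identification $\delta_i(e)\leftrightarrow\phi([\mu_i\otimes e])$ (and note the vanishing $h^0(X, Hom(\widetilde{F},E))=0$ for general $\widetilde{F}$, which the paper defers to a later remark), whereas the paper simply cites the elementary-transformation dictionary of Section 3.
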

\begin{proof}
(i) In the description of $Q_k$ in Lemma \ref{quot}, the support of
$\tau$ of $\widetilde{F} \in Q_k$ is given by the images $x_1, x_2,
\cdots, x_k$ of $\widetilde{F}$ under the projections
\[
\pi_1 = q_X^k, \quad \pi_2 = q_X^{k-1} q_{k-1}^k, \quad \ldots,
\quad \pi_k= \pi_X q_1^2 \ldots q^{k-1}_{k-2} q_{k-1}^k.
\]
Let $D$ be the big diagonal inside $X^k$ consisting of the
$k$-tuples $(x_1, x_2, \ldots, x_k)$ such that $x_i = x_j$ for some
$1 \le i \neq j \le k$. The inverse image of $X^k \setminus D$ under
the map $(\pi_1, \pi_2, \cdots, \pi_k): Q_k \to X^k$ parameterizes
those $\widetilde{F}$ such that the support of $\tau$ consists of
$k$ distinct points.
\par
(ii) The description of $\pp (\ker \beta) \in \pp$ as a join of $k$
distinct linear subspaces is a direct consequence of (i) and the
definition of the embedding $\phi: \pp Hom(F,E) \to \pp$, as was
explained in Section 3.
\end{proof}
Now we find a geometric criterion for the lifting of $\widetilde{F}$
as in (\ref{lifting}).
\begin{theorem} \label{geometry} \ \
Assume $E \in U(n-r,0), F \in U(r,d)$ where $d > (2g-1)r$ so that
$\Delta$ is a subvariety of the scroll $\pp Hom(F,E)$ in $\pp$.
Consider the diagram (\ref{lifting}) and the associated map
(\ref{beta}).
\begin{enumerate}
\renewcommand{\labelenumi}{(\roman{enumi})}
\item  If some elementary transformation $\widetilde{F} \in Q_k$ lifts
to $V$ so that the diagram (\ref{lifting}) is commutative, then $v
\in Sec^k \Delta$, where $\Delta$ is the sublocus of the scroll $\pp
Hom(F,E)$ defined in \ref{rank1}.\\
\item If a point $v \in Sec^k \Delta$ is general, then $V$ admits an
elementary transformation $\widetilde{F} \in Q_k$ as a subsheaf.\\
\item Either the condition that $V$ admits an elementary
transformation $\widetilde{F} \in Q_k$ as a subsheaf or the
condition $v \in Sec^k\Delta$ implies that $s_r(V) \le rd - n(d-k)$.
\end{enumerate}
\end{theorem}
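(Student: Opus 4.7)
\textbf{(i)} If some $\widetilde{F} \in Q_k$ lifts to $V$, then by Lemma~\ref{cohomology} we have $v \in \pp(\ker \beta)$, so it suffices to show $\pp(\ker \beta) \subset Sec^k \Delta$ for every $\widetilde{F} \in Q_k$. For a \emph{general} $\widetilde{F}$, Lemma~\ref{general}(ii) identifies $\pp(\ker \beta)$ with the join of the $k$ linear spaces $\pp(\mu_i \otimes E_{x_i})$ at distinct points $x_1,\ldots,x_k$. Each $\pp(\mu_i \otimes E_{x_i})$ is by definition contained in $\Delta_{x_i} \subset \Delta$, and a typical point of the join has the form $\sum c_i (\mu_i \otimes e_i)$, which lies on the $\pp^{k-1}$ spanned by the $k$ points $[\mu_i \otimes e_i] \in \Delta$. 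Hence the join lies in $Sec^k \Delta$. For arbitrary $\widetilde{F} \in Q_k$, I would invoke closedness: the incidence locus
\[
I \;:=\; \bigl\{ (\widetilde{F}, v) \in Q_k \times \pp \,:\, v \in \pp(\ker \beta(\widetilde{F})) \bigr\}
\]
is closed (since $\dim \ker \beta$ is upper semicontinuous in families parameterized by $Q_k$), its projection to $\pp$ lands in $Sec^k\Delta$ over the dense open subset of general $\widetilde{F}$, and $Q_k$ is irreducible, being the iterated projective bundle constructed in Lemma~\ref{quot}. Hence the whole image lies in $Sec^k\Delta$.

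\textbf{(ii)} A general $v \in Sec^k \Delta$ lies on a $\pp^{k-1}$ spanned by $k$ general points of $\Delta$, so $v = \sum_{i=1}^{k} c_i [\mu_i \otimes e_i]$ with distinct fiber points $x_i$, nonzero $\mu_i \in F_{x_i}^*$ and $e_i \in E_{x_i}$. Define
\[
\widetilde{F} \;:=\; \bigcap_{i=1}^{k} \ker\bigl[\mu_i: F \to \cc_{x_i}\bigr] \;\in\; Q_k.
\]
By Lemma~\ref{general}(ii), the associated $\pp(\ker \beta)$ equals the join of the $\pp(\mu_i \otimes E_{x_i})$, which visibly contains every combination $\sum c_i (\mu_i \otimes e_i)$, hence contains $v$. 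Lemma~\ref{cohomology} then gives a lift $\widetilde{F} \to V$, and since its composition with $V \to F$ is the defining inclusion $\widetilde{F} \hookrightarrow F$, the lift itself is injective, exhibiting $\widetilde{F}$ as a subsheaf of $V$.

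\textbf{(iii)} If $\widetilde{F} \in Q_k$ is a subsheaf of $V$, then $\rk \widetilde{F} = r$ and $\deg \widetilde{F} = d - k$, so by definition $s_r(V) \le r \deg V - n \deg \widetilde{F} = rd - n(d-k)$. For the second implication, part (ii) gives the bound for general $v \in Sec^k\Delta$; lower semicontinuity of $s_r$ applied to the tautological family of extensions parameterized by (an open cone over) $\pp$ makes $\{V : s_r(V) \le rd - n(d-k)\}$ a closed condition, and since it holds on a dense open of $Sec^k\Delta$ it holds on all of $Sec^k\Delta$.

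\textbf{Main obstacle.} The conceptual content is a direct repackaging of Lemmas~\ref{cohomology} and~\ref{general}; the one genuinely technical point is the closedness/specialization step in (i), i.e.\ verifying that $\pp(\ker \beta)$ varies in a controlled way over $Q_k$ so that the incidence argument carries non-general $\widetilde{F}$ into $Sec^k \Delta$. Everything else is either bookkeeping (degree count in (iii)), elementary linear algebra (the join description in (ii)), or an appeal to semicontinuity of $s_r$.
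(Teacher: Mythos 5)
Your parts (ii) and (iii) follow the paper's own argument essentially verbatim and are fine, as is the generic case of (i). The gap is in the step you yourself single out as the main technical point: the claim that the incidence locus $I$ is closed ``since $\dim \ker \beta$ is upper semicontinuous'' is false --- the semicontinuity goes the other way. From the long exact sequence of (\ref{Elem}), $\ker \beta = \im \left[ H^0(X, (\tau')^{\oplus(n-r)}) \to H^1(X, Hom(F,E)) \right]$, so (using $h^0(X, Hom(F,E))=0$) one has $\dim \ker \beta = k(n-r) - h^0(X, Hom(\widetilde{F},E))$, which can only \emph{drop} at special $\widetilde{F}$ --- exactly the phenomenon the paper's Remark following the theorem warns about. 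Hence $I$ is the locus where a fixed vector lies in the image of an algebraically varying family of linear maps; this is constructible but not closed in general (compare $\delta_t = t \cdot \id$ on a line: a nonzero $v$ lies in $\im \delta_t$ for $t \neq 0$ but not for $t=0$). So your projection argument does not, as written, carry a non-general $\widetilde{F}$ into $Sec^k \Delta$.

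The conclusion is still true, but the correct ``continuity'' runs in the opposite direction: for special $\widetilde{F}_0$ the space $\ker \beta_{\widetilde{F}_0}$ is \emph{contained in} a limit of the spaces $\ker \beta_{\widetilde{F}_t}$ as general $\widetilde{F}_t \to \widetilde{F}_0$. Concretely, $\ker \beta$ is the image of the fiber of a map $\delta$ from a vector bundle of constant rank $k(n-r)$ over $Q_k$ (namely $p_*$ of the family of sheaves $(\tau')^{\oplus(n-r)}$) into the fixed space $H^1(X, Hom(F,E))$; any $v \in \im \delta_{\widetilde{F}_0}$ equals $\delta_{\widetilde{F}_0}(w_0)$ for some $w_0$, hence is a limit of vectors $\delta_{\widetilde{F}_t}(w_t)$, each of which lies in the affine cone over $Sec^k \Delta$ by the generic computation of Lemma \ref{general}(ii); since that cone is closed, so does $v$. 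This is what the paper's one-line ``by continuity'' is implicitly doing, and with this repair your proof coincides with the paper's.
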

\begin{proof} \
If an elementary transformation $\widetilde{F} \subset F$ has a
lifting $\widetilde{F} \to V$, then $v \in \pp (\ker \beta)$ by
Lemma \ref{cohomology}. If $\widetilde{F} \in Q_k$ is general, then
by Lemma \ref{general}, the point $v$ lies on the linear space
spanned by $[\mu_i \otimes e_i] \in \Delta$ for some $\mu_i \in
F_{x_i}^*$ and $e_i \in E_{x_i}$, $1 \le i \le k$. Therefore $v \in
Sec^k \Delta$. By continuity, we get (i).

Now suppose that $v$ is a general point of $Sec^k \Delta$. Then $v$
lies on some linear space spanned by $\{ [\mu_i \otimes e_i] \in
\Delta_{x_i} : i = 1,2,\cdots, k \}$ for some distinct points $x_1,
x_2, \ldots, x_k \in X$. This data defines an elementary
transformation $\widetilde{F}$ of $F$, which is general if we choose
the vectors $\mu_i \otimes e_i$ generally. Hence $v \in \pp (\ker
\beta)$ by Lemma \ref{general} and $\widetilde{F} \to F$ has a
desired lifting $\widetilde{F} \to V$ by Lemma \ref{cohomology}.
This proves (ii).

Finally, consider (iii). The condition that $V$ admits an elementary
transformation $\widetilde{F} \in Q_k$ as a subsheaf, obviously
implies $s_r(V) \le rd - n(d-k)$. Now consider the family of vector
bundles parameterized by $Sec^k \Delta \subset \pp$. From (ii), a
general bundle $V$ with $v \in Sec^k \Delta$ has a subsheaf
$\widetilde{F}$ of degree $d-k$, hence $s_r(V) \le rd - n(d-k)$.
Since the invariant $s_r$ is lower semicontinuous, this inequality
holds for every bundle $V$ with $v \in Sec^k \Delta$.
\end{proof}
\begin{remit}
{\rm (i) It can be asked if the inequality $s_r(V) \le rd - n(d-k)$
implies either the condition $v \in Sec^k \Delta$ or the lifting of
some $\widetilde{F} \in Q_k$ to $V$. The answer seems to be No, due
to the possible existence of diagrams of the following form:
\begin{equation} \label{exception}
\begin{CD}
0 @>>> E  @>>>  V  @>>>  F_d   @>>>  0   \\
@.   @AAA      @AAA         @AAA               @.  \\
0  @>>>    E'                @>>>  F_{d-k}  @>>>  E''  @>>> 0
\end{CD}
\end{equation}
The point is that a subsheaf $F_{d-k}$ of $V$ of rank $r$ and degree
$d-k$ may intersect $E$ in some subsheaf $E'$ of rank $\ge 1$. If
one can prove that this kind of diagram does not exist for general
$V$, then we get the equivalence
\begin{equation} \label{equivalence}
s_r(V) \le rd - n(d-k) \ \Leftrightarrow \ v \in Sec^k \Delta
\end{equation}
by the closedness of both spaces. But in general, we cannot exclude
the possibility that every maximal subbundle $F_{d-k}$ of $V$ yields
a diagram of the form (\ref{exception}).

In the special case when either $r=1$ or $r=n-1$, the diagram
(\ref{exception}) does not exist, and we get the equivalence
(\ref{equivalence}). When $n=2$ and $r=1$, this is exactly the
statement in Proposition \ref{LN}.\\
(ii) Also one may ask if the generality assumption in (ii) can be
dropped. The answer is Yes if $\dim (\ker \beta)$ is constant for
every $\widetilde{F} \in Q_k$. In this case, there is a projective
bundle $\cP$ over $Q_k$ whose fiber over $\widetilde{F}$ is the
corresponding $\pp (\ker \beta)$. This fibration induces a morphism
$\cP \to \pp$ which maps onto $Sec^k \Delta$. But in general, $\dim
(\ker \beta)$ may drop, since there may exist $\widetilde{F} \in
Q_k$ with $h^0(X, Hom(\widetilde{F}, E)) \neq 0$. If $k<d$ then
$\deg Hom(\widetilde{F}, E)<0$, so such an $\widetilde{F}$ is
necessarily unstable.}
\end{remit}

\section{Hirschowitz bound}

In this section, we reprove Hirschowitz bound in Proposition
\ref{Hirschowitz} by applying our geometric criterion on lifting of
elementary transformations.

Let $V$ be a bundle in $U(n,d)$. Note that the upper bound of $s_r$
is attained by a general bundle in $U(n,d)$, due to the
semi-continuity of $s_r$. Hence it suffices to prove the upper bound
for general $V$. Moreover, since $s_r(V) = s_r(V \otimes L)$ for any
line bundle $L$, we may assume that $d = \deg V
> (2g-1)n$. Then by Lemma \ref{setting}, $V$ fits into the exact
sequence
\[
0 \to E \to V \to F \to 0
\]
for some general bundles $E \in U(n-r,0)$ and $F \in U(r,d)$. Let $v
\in \pp = \pp H^1(X, Hom(F,E))$ be the point corresponding to this
sequence.

Consider the subvariety $\Delta$ of the scroll $\pp Hom(F,E) \subset
\pp$. Since $\dim \Delta = n-1$, the expected dimension of $Sec^k
\Delta$ is $nk-1$ unless $Sec^k \Delta= \pp$. Note that $\dim \pp =
(n-r)(d + r(g-1))-1$ by Riemann-Roch formula. Thus if $\Delta
\subset \pp$ has no secant defect, then
\[ v \in Sec^m \Delta \ \
\text{for } m = \left\lceil \frac{n-r}{n} \left( d+r(g-1) \right)
\right\rceil.
\]
We may write $(n-r)(d+r(g-1)) = mn - \varepsilon$ for some
$\varepsilon$ in the range $0 \le  \varepsilon \le n-1$. By Theorem
\ref{geometry} (iii),
\begin{eqnarray*}
s_r(V) &\le& rd - n(d-m) \\
&=& r(n-r)(g-1) + \varepsilon.
\end{eqnarray*}
Therefore the Hirschowitz bound is obtained from the following.
\begin{theorem} \label{defect}
For general bundles $E \in U(n-r,0)$ and $F \in U(r,d)$ where $d >
(2g-1)n$, the subvariety $\Delta$ in $\pp$ has no secant defect.
\end{theorem}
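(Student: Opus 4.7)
The plan is the Terracini-plus-Hirschowitz strategy anticipated in the introduction. Let $p_1,\ldots, p_k \in \Delta$ be general points, written as $p_i = [\mu_i \otimes e_i]$ with $\mu_i \in F_{x_i}^*$, $e_i \in E_{x_i}$ and $x_i \in X$ distinct. Denote by $T_i \subset H^1(X, \Hom(F,E))$ the affine cone of the embedded tangent space $\TT_{p_i}\Delta \subset \pp$, so $\dim T_i = n = \dim\Delta + 1$. By Terracini's lemma, non-defectiveness at step $k$ is equivalent to $\dim(T_1 + \cdots + T_k) = \min(kn, \dim \pp + 1)$, and by Serre duality this becomes the assertion that
\[
\Sigma := \bigcap_i T_i^\perp \;\subset\; H^0(X, K_X \otimes \Hom(E,F))
\]
--- the space of sections vanishing to order $\geq 2$ at each $p_i$ --- has the expected codimension $\min(kn, \dim H^0)$.

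The key cohomological identification is that $\Sigma$ equals $H^0(X, K_X \otimes \Hom(\widehat{E}, \widetilde{F}))$, where $\widetilde{F} \subset F$ is the elementary transformation associated with $(\mu_1,\ldots,\mu_k)$ at $(x_1,\ldots,x_k)$ and $\widehat{E} \supset E$ is the elementary transformation at $(e_1,\ldots,e_k)$ (so $\deg \widetilde{F} = d - k$ and $\deg \widehat{E} = k$). The natural inclusion $\widehat{E}^* \otimes K_X \otimes \widetilde{F} \hookrightarrow E^* \otimes K_X \otimes F$ has torsion cokernel of total length $kn$, arising at each $x_i$ as an extension of the skyscraper $K_{x_i} \otimes F_{x_i}$ (length $r$) by $\widehat{E}^*_{x_i} \otimes K_{x_i}$ (length $n-r$). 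Under the dictionary between $s$ and the associated homomorphism $E \to K_X \otimes F$, the $(n-r)$ conditions $\mu_i\circ s(x_i) = 0$ correspond to $s$ factoring through $E \to K_X \otimes \widetilde{F}$, while the $r$ extension-obstruction conditions $s(x_i)(e_i) = 0 \in K_{x_i} \otimes \widetilde{F}_{x_i}$ package together the $r-1$ fiberwise conditions $s(x_i)(e_i) = 0 \in K_{x_i}\otimes F_{x_i}$ and the one remaining horizontal-derivative condition along $X$, totalling the required $n$ order-$2$ vanishing conditions per point.

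With this identification, Riemann--Roch yields
\[
\chi(K_X \otimes \Hom(\widehat{E}, \widetilde{F})) = (n-r)(d+r(g-1)) - kn,
\]
matching the expected codimension of $\Sigma$ exactly. So non-defectiveness reduces to $h^1(K_X \otimes \Hom(\widehat{E}, \widetilde{F})) = 0$ in the range $kn \leq h^0$. Since elementary transformations form irreducible families (Lemma \ref{quot}) and $E, F$ are general, the bundles $\widehat{E}$ and $\widetilde{F}$ are generic in $U(n-r,k)$ and $U(r,d-k)$, and Hirschowitz's lemma on the nonspeciality of tensor products of two general semistable bundles with slope sum exceeding $2g-2$, applied to $K_X \otimes \widehat{E}^*$ and $\widetilde{F}$, supplies the vanishing. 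The boundary case $k = m := \lceil(n-r)(d+r(g-1))/n\rceil$, in which $Sec^m\Delta$ is expected to fill $\pp$, then follows from non-defect at $k=m-1$ via a monotonicity/join-dimension argument using that $\Delta$ is not contained in $Sec^{m-1}\Delta$.

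The main technical obstacle is the identification $\Sigma = H^0(K_X \otimes \Hom(\widehat{E}, \widetilde{F}))$ in the middle step: correctly matching the mixed fiberwise-and-horizontal order-$2$ vanishing data on $\Delta$ with the $\mathrm{Ext}^1$-obstruction description of extending a map $E \to K_X\otimes \widetilde{F}$ to $\widehat{E} \to K_X\otimes \widetilde{F}$. Once this is pinned down and the generality of $(\widehat{E},\widetilde{F})$ is transferred from that of $(E,F)$, Hirschowitz's theorem does the rest.
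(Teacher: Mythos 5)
Your proposal follows essentially the same route as the paper: Terracini's lemma, the identification of the joined embedded tangent spaces of $\Delta$ at general points $[\mu_i \otimes e_i]$ with elementary-transformation data, and Hirschowitz's nonspeciality lemma applied to the resulting pair of general bundles. You phrase everything in the Serre-dual picture, computing the codimension of $\Sigma = H^0(X, K_X \otimes Hom(\widehat{E},\widetilde{F}))$ inside $H^0(X, K_X \otimes Hom(E,F))$, whereas the paper computes $\dim \pp(\ker \Gamma)$ for $\Gamma: H^1(X, Hom(F,E)) \to H^1(X, Hom(\widetilde{F},\widehat{E}))$; since $(\widehat{F^*})^* \cong \widetilde{F}$, these are the same computation up to duality, and your order-two-vanishing description of $\Sigma$ matches the paper's local analysis in its Lemma on tangent spaces.

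The one step that does not work as written is the boundary case $k = m$. The general monotonicity of secant dimensions only guarantees $\dim Sec^{m}\Delta \ge \dim Sec^{m-1}\Delta + 1$, whereas filling $\pp$ requires a jump of $n - \varepsilon$, which can be as large as $n-1$; so "non-defect at $k=m-1$ plus monotonicity" is insufficient for $n \ge 3$. No separate argument is needed, though: your own computation applies verbatim at $k = m$, where $\chi(K_X \otimes Hom(\widehat{E},\widetilde{F})) = -\varepsilon \le 0$, so Hirschowitz nonspeciality gives $h^0 = 0$, hence $\Sigma = 0$ and $Sec^m \Delta = \pp$. This direct treatment of $k=m$ is exactly what the paper does.
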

To prove this, we invoke the Terracini lemma (cf.\ \cite{Te}).
\begin{proposition}
(Terracini lemma) \ \ Let $Z \subset \pp^N$ be a projective variety,
and let $z_1, \ldots, z_k$ be general points of $Z$. Then
\[
\dim (Sec^k Z) = \dim <\TT_{z_1} Z, \ldots, \TT_{z_k} Z >,
\]
where $\TT_{z_i}Z$ denotes the embedded tangent space to $Z$ in
$\pp^N$ at $z_i$. \qed
\end{proposition}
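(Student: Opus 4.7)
The plan is to apply the Terracini lemma to $k$ general points $z_i = [\mu_i \otimes e_i] \in \Delta$ with pairwise distinct base points $x_i := \pi(z_i)$, and to compute the projective dimension of $\Sigma := \langle \TT_{z_1}\Delta, \ldots, \TT_{z_k}\Delta \rangle$ cohomologically; non-defectiveness amounts to $\dim \Sigma = \min(nk-1, \dim \pp)$. First I would identify each $\TT_{z_i}\Delta$ inside $\pp H^1(X, Hom(F,E))$ by imitating the scroll tangent space description recalled in Section~3. Set $\widetilde{F}_i := \ker(\mu_i : F \to \cc_{x_i})$ and let $\widehat{E}_i$ be the upward elementary transformation of $E$ at $x_i$ in direction $e_i$. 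A local calculation at $x_i$ shows that
\[
Q_i := Hom(\widetilde{F}_i, \widehat{E}_i)/Hom(F,E)
\]
is a torsion sheaf of length $n$ whose principal parts encode exactly the tangent deformations of $[\mu_i \otimes e_i]$ as a point of $\Delta$: one length-$2$ piece recording the basepoint together with the horizontal direction along $X$, and $n-2$ further length-$1$ pieces spanning the Segre tangents in $(F_{x_i}^* \otimes e_i) + (\mu_i \otimes E_{x_i})$ transverse to $\mu_i \otimes e_i$. The coboundary of $0 \to Hom(F,E) \to Hom(\widetilde{F}_i,\widehat{E}_i) \to Q_i \to 0$ then embeds $H^0(Q_i)$ as the $n$-dimensional affine cone over $\TT_{z_i}\Delta$.

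Since the $x_i$ are pairwise distinct, the individual transformations patch into a single pair $\widetilde{F} \subset F$ of degree $d-k$ and $E \subset \widehat{E}$ of degree $k$, the total cokernel $Q \cong \bigoplus_i Q_i$ has length $nk$, and disjointness of supports forces
\[
\Sigma \;=\; \pp \ker\left[H^1(Hom(F,E)) \to H^1(Hom(\widetilde{F}, \widehat{E}))\right].
\]
The long exact sequence of the quotient sheaf gives
\[
\dim \ker \;=\; nk - h^0(Hom(\widetilde{F}, \widehat{E})) + h^0(Hom(F,E)),
\]
and $h^0(Hom(F,E)) = 0$ follows from polystability of $F^* \otimes E$ and negativity of its slope, using $d > (2g-1)n$.

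To control $h^0(\widetilde{F}^* \otimes \widehat{E})$ I would invoke Hirschowitz' lemma on non-speciality of the tensor product of two general bundles, after verifying that iterated elementary transformations of a general stable bundle at general points in general directions sweep out a Zariski-dense subset of the corresponding moduli space, so that for general data $\widetilde{F}$ and $\widehat{E}$ are themselves general. When $nk \le \dim \pp + 1$ we have $\chi(\widetilde{F}^* \otimes \widehat{E}) \le 0$, non-speciality forces $h^0 = 0$ and hence $\dim \Sigma = nk - 1$; in the complementary range $\chi > 0$ non-speciality forces $h^1 = 0$, the cohomology map is surjective, and $\Sigma = \pp$. Either way $\Delta$ has the expected secant dimension. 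The main obstacle I expect is the cohomological identification of the tangent space: one must verify carefully that $Q_i$ records \emph{exactly} the Segre-plus-horizontal deformations at $z_i$ and nothing more, and that the span of the individual kernels really equals the kernel of the combined map into $H^1(Hom(\widetilde{F}, \widehat{E}))$, alongside the dominance statement needed to push Hirschowitz' genericity hypothesis through the elementary-transformation construction.
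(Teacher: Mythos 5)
Your proposal does not prove the statement in question. The proposition to be established is Terracini's lemma itself --- a general fact about an arbitrary projective variety $Z \subset \pp^N$, asserting that $\dim (Sec^k Z)$ equals the dimension of the linear span of the embedded tangent spaces at $k$ general points of $Z$. Your write-up opens with ``the plan is to apply the Terracini lemma'' and then carries out a computation specific to the rank-one locus $\Delta \subset \pp H^1(X, Hom(F,E))$: the identification of $\TT_{[\mu_i \otimes e_i]}\Delta$ via elementary transformations, the degree count for $\widehat{F^*} \otimes \widehat{E}$, and Hirschowitz' non-speciality lemma. That is a proof sketch of Theorem \ref{defect} (the secant non-defectiveness of $\Delta$), which in the paper is the \emph{application} of the Terracini lemma, not the lemma itself. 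As a proof of the stated proposition it is circular, since it assumes the proposition in its first sentence, and it is in any case not general: nothing in it addresses an arbitrary $Z \subset \pp^N$.

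A proof of Terracini's lemma would run along entirely different lines: parametrize $Sec^k Z$ by the incidence variety of $k$-tuples of points of $Z$ together with a point of their linear span, compute the differential of the projection to $\pp^N$ at a general point, identify the image of that differential with the affine span of the cones over $\TT_{z_1}Z, \ldots, \TT_{z_k}Z$, and conclude by generic smoothness (which is where characteristic zero enters). The paper itself offers no proof --- the proposition is stated with an immediate end-of-proof mark and a citation to \cite{Te} --- so the appropriate response is either to cite the classical reference or to supply the incidence-variety argument just described. For what it is worth, read as a proof of Theorem \ref{defect} your text tracks the paper's actual argument (Lemma \ref{tangent} combined with Hirschowitz' lemma and the Euler characteristic computation) quite closely; but it is attached to the wrong statement.
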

Now we use the dictionary in Section 3 to describe the embedded
tangent spaces of $\Delta \subset \pp Hom(F,E)$ in terms of the
elementary transformations.
\begin{lemma} \label{tangent}
Let $[\mu \otimes e] \in \Delta_x$ in $\pp Hom(F,E)|_x$. Let
$\widehat{F^*}$ be the elementary transformation of $F^*$ at $\mu$.
Also let $\widehat{E}$ be the elementary transformation of $E$ at
$e$. Consider the elementary transformation
\begin{equation} \label{embedded}
0 \to F^* \otimes E \to \widehat{F^*}  \otimes \widehat{E} \to \tau
\to 0
\end{equation}
where $\tau$ is some torsion sheaf $\tau$ of degree $n$. Then
\[
\TT_{[\mu \otimes e]} \Delta = \pp \ker [\gamma: H^1(X, F^* \otimes
E) \to H^1(X, \widehat{F^*}  \otimes \widehat{E})]
\]
for the map $\gamma$ coming from the long exact sequence associated
with (\ref{embedded}).
\end{lemma}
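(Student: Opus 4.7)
My plan is to verify the asserted equality in three steps: (1) compute $\dim\pp\ker\gamma$ from the long exact sequence associated to (\ref{embedded}); (2) establish $\TT_{[\mu\otimes e]}\Delta\subseteq\pp\ker\gamma$ by a local Čech computation at $x$; and (3) conclude by matching dimensions. The long exact sequence reads
\[
0\to H^0(F^*\otimes E)\to H^0(\widehat{F^*}\otimes\widehat{E})\to H^0(\tau)\to H^1(F^*\otimes E)\stackrel{\gamma}{\to}H^1(\widehat{F^*}\otimes\widehat{E})\to 0.
\]
The hypothesis $d>(2g-1)r$ forces $F^*\otimes E$ to have slope less than $-2$, yielding $H^0(F^*\otimes E)=0$; the same argument, together with a mild genericity on $\mu,e$, gives $H^0(\widehat{F^*}\otimes\widehat{E})=0$. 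Combining these with $h^0(\tau)=\mathrm{length}(\tau)=n$ (visible from either short exact sequence $0\to F^*|_x\to\tau\to\widehat{E}|_x\to 0$ or $0\to E|_x\to\tau\to\widehat{F^*}|_x\to 0$), one obtains $\dim\pp\ker\gamma=n-1=\dim\Delta$, so it suffices to establish one-sided containment.

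For step (2), I would choose a local coordinate $s$ at $x$ together with local frames $f_1=\mu,f_2,\dots,f_r$ of $F^*$ and $e_1=e,e_2,\dots,e_{n-r}$ of $E$. Then $\widehat{F^*}$ and $\widehat{E}$ have local frames $\{f_1/s,f_2,\dots,f_r\}$ and $\{e_1/s,e_2,\dots,e_{n-r}\}$ respectively, so the rational sections $f_i\otimes e_1/s$, $f_1\otimes e_j/s$, and $f_1\otimes e_1/s^2$ all extend to regular local sections of $\widehat{F^*}\otimes\widehat{E}$ near $x$. Recall from Section 3 that in Čech cohomology the embedding $\phi$ sends a point $[w]$ with $w\in(F^*\otimes E)_y$ to the class of the rational section $w/s_y$ with simple pole at $y$. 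I would then check, direction by direction, that each affine tangent direction to $\Delta$ at $[\mu\otimes e]$ maps into $\ker\gamma$: the two kinds of fiber directions $f_1\otimes e'$ ($e'\in\mathrm{span}(e_2,\dots,e_{n-r})$) and $\mu'\otimes e_1$ ($\mu'\in\mathrm{span}(f_2,\dots,f_r)$) contribute the classes $[f_1\otimes e'/s]$ and $[\mu'\otimes e_1/s]$, while the horizontal direction along $X$, obtained by differentiating the family $\phi([\sigma_F(x_t)\otimes\sigma_E(x_t)])$ for holomorphic lifts $\sigma_F,\sigma_E$ of $\mu,e$, contributes the class $[f_1\otimes e_1/s^2]$. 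In every case the representative is regular in $\widehat{F^*}\otimes\widehat{E}$, so the class lies in $\ker\gamma$.

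The step I expect to be the main obstacle is the horizontal direction: one has to identify precisely how $\phi$ varies when the base point $x_t$ moves along $X$, and see that the second-order pole $s^{-2}$ produced by differentiation is exactly what the \emph{double} elementary transformation $\widehat{F^*}\otimes\widehat{E}$ is designed to absorb. As a consistency check one can verify directly that $\widehat{F^*}\otimes\widehat{E}$ is a subsheaf of $\widehat{W}\otimes\cO_X(x)$, where $\widehat{W}$ is the elementary transformation of $W=F^*\otimes E$ at $\mu\otimes e$, so that $\pp\ker\gamma$ is automatically contained in the ambient tangent space $\TT_{[\mu\otimes e]}\pp(F^*\otimes E)$ of Section 3. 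Once the local computation is in place, combining the inclusion from step (2) with the dimension count from step (1) yields the claimed equality $\TT_{[\mu\otimes e]}\Delta=\pp\ker\gamma$.
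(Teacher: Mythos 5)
Your proposal is correct and takes essentially the same route as the paper: the authors also work in a local trivialisation at $x$, observe that $\widehat{F^*}\otimes\widehat{E}$ becomes $\cO_U(2x)\oplus\cO_U(x)^{\oplus(n-2)}\oplus\cO_U^{\oplus(r-1)(n-r-1)}$ (exactly your frames $f_1\otimes e_1/s^2$, $f_1\otimes e_j/s$, $f_i\otimes e_1/s$), identify the $\cc_{2x}$ piece of $\tau$ with the horizontal direction along $X$ and the $\cc_x^{\oplus(n-2)}$ piece with the two Segre fiber directions $\pp(\mu\otimes E_x)$ and $\pp(F_x^*\otimes e)$, and close the argument with the same dimension count $n-1$. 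The only cosmetic difference is that you prove the containment $\TT_{[\mu\otimes e]}\Delta\subseteq\pp\ker\gamma$ and compute $\dim\pp\ker\gamma$ cohomologically, whereas the paper notes that the pieces of $\tau$ are all tangent data spanning dimension $n-1$; both reductions are equivalent.
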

\begin{proof}
Since we are considering a local problem, we consider
trivialisations of $F^*$ and $E$ over a suitable neighbourhood $U$
of $x$. Then $F^* \otimes E|_U \cong \cO_U^{\oplus r(n-r)}$. Taking
elementary transformations, we obtain
\[
\widehat{F^*} \otimes \widehat{E} \ \cong \ \cO_U(2x) \oplus
\cO_U(x)^{\oplus (n-2)} \oplus \cO_U^{\oplus (r-1)(n-r-1)}.
\]
Therefore the skyscraper sheaf $\tau$ is isomorphic to $\cc_{2x}
\oplus \cc_{x}^{\oplus (n-2)}$. The first term $\cc_{2x}$
corresponds to the line defined by the constant section $\mu \otimes
e : U \to \pp(F^* \otimes E)$. The next term $\cc_{x}^{\oplus
(n-2)}$ together with the subsheaf $\cc_{x} \subset \cc_{2x}$
correspond to the join of two linear spaces
\[ \pp (\mu \otimes E_x) \cong \pp^{n-r-1} \quad \hbox{and} \quad \pp (F^*_x \otimes e) \cong \pp^{r-1} \]
which intersect at $[\mu \otimes e]$. Since all of these are tangent
data of $\Delta$ and they span a linear space of dimension $n-1$, we
get the desired result.
\end{proof}
\noindent {\it Proof of Theorem \ref{defect}}. \ From the dimension
computations in the beginning of this section, it suffices to show
that
\[
\dim Sec^{k} \Delta = nk-1
\]
for every $k<m = \left\lceil \frac{n-r}{n} \left( d+r(g-1) \right)
\right\rceil$, and
\[
\dim Sec^m \Delta = \dim \pp = nm-1- \varepsilon,
\]
where $0 \le \varepsilon \le n-1$. By the Terracini lemma,
\[
\dim Sec^k \Delta \ = \ \dim \left( < \TT_{[\mu_1 \otimes e_1]}
\Delta, \ldots, \TT_{[\mu_k \otimes e_k]} \Delta > \right)
\]
for general points $[\mu_i \otimes e_i] \in \Delta, 1 \le i \le k$.

By generality, we may assume that $\mu_1 \otimes e_1, \ldots, \mu_k
\otimes e_k$ lie over $k$ distinct points of $X$. In this case, by
Lemma \ref{tangent}, the join of the spaces $ \TT_{[\mu_i \otimes
e_i]} \Delta$ in $\pp$ can be expressed as the projectivised kernel
of the map
\[
\Gamma: H^1(X, F^* \otimes E) \to H^1(X, \widehat{F^*} \otimes
\widehat{E})
\]
where $\widehat{F^*}$ (resp. $\widehat{E}$) are the elementary
transformations of $F^*$ (resp. $E$) at $\mu_1, \ldots, \mu_k$
(resp. $e_1, \ldots, e_k$).

Note that
\begin{eqnarray*}
\deg (\widehat{F^*} \otimes \widehat{E}) &=& r \deg \widehat{E} +
(n-r) \deg \widehat{F^*} = rk + (n-r) (-d +k)\\
&=& r m+(n-r)(-d+m) - n(m-k) \\
&=& nm   - (n-r)d - n(m-k)\\
 &=& r(n-r)(g-1) + \varepsilon - n(m-k),
\end{eqnarray*}
where $0 \le \varepsilon \le n-1$. If $\widehat{F^*} \otimes
\widehat{E}$ is nonspecial, then $h^0(X, \widehat{F^*} \otimes
\widehat{E}) = 0$ for $k<m$ and $h^0(X, \widehat{F^*} \otimes
\widehat{E}) = \varepsilon$ for $k= m$. Thus
\begin{eqnarray*}
\dim \pp (\ker \Gamma) &=& - \deg (F^* \otimes E) + \deg
(\widehat{F^*} \otimes \widehat{E}) - h^0(X, \widehat{F^*} \otimes
\widehat{E}) -1\\
&=& n k - h^0(X, \widehat{F^*} \otimes \widehat{E}) -1 \\
&=& \min \{ nk-1, \dim \pp \}
\end{eqnarray*}
as expected. Therefore it remains to check that $\widehat{F^*}
\otimes \widehat{E}$ is nonspecial. But we assumed that $E$ and $F$
are general, and $\widehat{E}$ and $\widehat{F^*}$ are obtained from
a general elementary transformation of $E$ and $F^*$ respectively,
hence they are general. By Hirschowitz' lemma (\cite{Hi} 4.6, see
also \cite{RuTe} Theorem 1.2), the tensor product of two general
bundles is nonspecial. \qed\\

{\it Acknowledgements}. \ \  The authors would like to thank Peter
Newstead for his kind answer regarding Lemma \ref{setting}. The
first author acknowledges the hospitality of H\o gskolen i Vestfold
during his visit in January 2008. The second named author
acknowledges gratefully the hospitality of the Korean Institute for
Advanced Study and Konkuk University during his visits in October
2007 and February 2009 respectively.

\vskip 20 pt

\end{document}